\newcommand{\pr}{\mathrm{pr}}
\newcommand{\Cl}{\mathrm{Cl}}
\newcommand{\C}{\mathcal C}
\newcommand{\B}{\mathcal B}
\newcommand{\R}{\mathbb R}
\newcommand{\N}{\mathbb N}
\newcommand{\F}{\mathcal F}
\newtheorem{theorem}{Theorem}
\newtheorem{lemma}{Lemma}
\newtheorem{proposition}{Proposition}
\newtheorem{ex}{Example}
\begin{document}

\title{Equilibrium under uncertainty with fuzzy payoff}
\author{Taras Radul}

\maketitle

Institute of Mathematics, Casimirus the Great University of Bydgoszcz, Poland;
\newline
Department of Mechanics and Mathematics, Ivan Franko National University of Lviv,
Universytettska st., 1. 79000 Lviv, Ukraine.
\newline
e-mail: tarasradul@yahoo.co.uk

\textbf{Key words and phrases:}  Non-additive measures, equilibrium under uncertainty, possibility capacity, necessity capacity, fuzzy integral, t-norm

\subjclass[MSC 2020]{28E10,91A10,52A01,54H25}

\begin{abstract} This paper studies n-player games where players' beliefs about their opponents'
behaviour are capacities (fuzzy measures, non-additive probabilities). The concept of an equilibrium
under uncertainty was introduced by J.Dow and S.Werlang  (1994)   for two players and was extended  to n-player
games by  J.Eichberger and D.Kelsey  (2000).   Expected utility (payoff function) was expressed by Choquet
integral.  The concept of an equilibrium under uncertainty   with expected utility  expressed by Sugeno
integral were considered by T.Radul (2018). We consider in this paper an equilibrium with expected utility  expressed by fuzzy integral generated by a continuous t-norm which is a natural generalization of Sugeno
integral.
\end{abstract}

\maketitle

\section{Introduction}

The classical Nash equilibrium theory is based on fixed point theory and was developed in frames of linear convexity. The mixed strategies of a player are probability (additive) measures on a set of pure strategies. But an interest to Nash equilibria in more general frames is rapidly growing in last decades. For instance,
Aliprantis, Florenzano and Tourky \cite{AFT} work in ordered topological vector spaces, Luo \cite{L} in topological semilattices,
Vives \cite{Vi} in complete lattices.  Briec and Horvath \cite{Ch} proved  existence of Nash equilibrium point for $B$-convexity and MaxPlus convexity.

We can use additive measures only when we know precisely probabilities of all events considered in a game. However, it is not a case
in many modern economic models. The decision theory under uncertainty considers a model when probabilities of states are either not known or imprecisely specified. Gilboa \cite{Gil} and Schmeidler  \cite{Sch} axiomatized  expectations expressed by Choquet
integrals attached to non-additive measures called capacities (fuzzy measures), as a formal approach to decision-making under uncertainty. Dow and Werlang \cite{DW} used this approach for two players game where belief of each player about a choice of the strategy by the other player is a capacity. They introduced some equilibrium notion for such games and proved its existence.  This result was extended onto games with arbitrary finite number of players in \cite{EK}. Another interesting approach to the games in convex capacities with pay-off functions expressed by Choquet integrals can be find in \cite{Ma}.


An alternative to so-called Choquet expected utility model is the qualitative decision theory.   The corresponding expected utility is expressed by Sugeno integral. This approach was widely studied in the last decade  (\cite{DP},\cite{DP1},\cite{CH1},\cite{CH}).  Sugeno integral chooses a median value of utilities which is qualitative counterpart of the averaging operation by Choquet integral.

The equilibrium notion from \cite{DW} and \cite{EK} for a game  with expected payoff function defined by Sugeno integral was considered in \cite{R4}. The sets of pure strategies are arbitrarily compacta.  Let us remark that in \cite{DW} and \cite{EK} attention was restricted to convex capacities which play an important role in Choquet expected utility theory. There are two important classes of capacities in the qualitative decision theory, namely  possibility and necessity capacities which describe optimistic and pessimistic criteria \cite{DP}. The existence of equilibrium expressed by possibility (or necessity) capacities is proved in  \cite{R4}. Since the spaces of possibility and  necessity capacities have no natural linear convex structure, some non-linear convexity was  used.

Let us remark that the equilibrium notion from \cite{DW} supposed that players are allowed to form non-additive beliefs about opponent's decision and answer with pure strategies. Another approach was considered   \cite{KZ} and \cite{GM} for games with Choquet payoff where players are allowed to form non-additive beliefs about opponent's decision but also to play their mixed non-additive strategies expressed by capacities.  The same approach for games with Sugeno payoff was considered in \cite{R3}. Games with strategies expressed by possibility capacities were recently considered by Hosni and Marchioni \cite{HM}. They considered payoff functions represented by Choquet integral and Sugeno integral. Games with expected payoff functions represented by fuzzy integrals generated by the maximum operation and some continuous triangular norm (a partial case is the  Sugeno integral which is generated by the maximum and the minimum operations) were considered in \cite{R5}.

We consider the equilibrium notion from \cite{DW} with beliefs expressed by possibility (or necessity) capacities and payoff functions represented by fuzzy integrals in this paper. We prove existence of such equilibrium.

\section{Games with non-additive beliefs} By compactum we mean a compact Hausdorff space.  In what follows, all spaces are assumed to be compacta except for $\R$ and maps are assumed to be continuous. Let $A$ be a subset of $X$. By $\Cl A$ we denote the closure of $A$ in $X$. By $\F(X)$ we denote the family of all closed subsets of $X$.

We need the definition of capacity on a compactum $X$. We follow a terminology of \cite{NZ}.
A function $\nu:\F(X)\to [0,1]$  is called an {\it upper-semicontinuous capacity} on $X$ if the three following properties hold for each closed subsets $F$ and $G$ of $X$:

1. $\nu(X)=1$, $\nu(\emptyset)=0$,

2. if $F\subset G$, then $\nu(F)\le \nu(G)$,

3. if $\nu(F)<a$, then there exists an open set $O\supset F$ such that $\nu(B)<a$ for each compactum $B\subset O$.

If $F$ is a one-point set we use a simpler notation $\nu(a)$ instead $\nu(\{a\})$.
A capacity $\nu$ is extended in \cite{NZ} to all open subsets $U\subset X$ by the formula $\nu(U)=\sup\{\nu(K)\mid K$ is a closed subset of $X$ such that $K\subset U\}$.

It was proved in \cite{NZ} that the space $MX$ of all upper-semicontinuous  capacities on a compactum $X$ is a compactum as well, if a topology on $MX$ is defined by a subbase that consists of all sets of the form $O_-(F,a)=\{c\in MX\mid c(F)<a\}$, where $F$ is a closed subset of $X$, $a\in [0,1]$, and $O_+(U,a)=\{c\in MX\mid c(U)>a\}$, where $U$ is an open subset of $X$, $a\in [0,1]$. Since all capacities we consider here are upper-semicontinuous, in the following we call elements of $MX$ simply capacities.

A capacity $\nu\in MX$ for a compactum $X$ is called  a necessity (possibility) capacity if for each family $\{A_t\}_{t\in T}$ of closed subsets of $X$ (such that $\bigcup_{t\in T}A_t$ is a closed subset of $X$) we have $\nu(\bigcap_{t\in T}A_t)=\inf_{t\in T}\nu(A_t)$  ($\nu(\bigcup_{t\in T}A_t)=\sup_{t\in T}\nu(A_t)$). (See \cite{WK} for more details.) We denote by $M_\cap X$ ($M_\cup X$) a subspace of $MX$ consisting of all necessity (possibility) capacities. Since $X$ is compact and $\nu$ is upper-semicontinuous, $\nu\in M_\cap X$ iff $\nu$ satisfy the simpler requirement that $\nu(A\cap B)=\min\{\nu(A),\nu(B)\}$.

If $\nu$ is a capacity on a compactum $X$, then  the function $\kappa X(\nu)$, that is defined on the family $\F(X)$  by the formula $\kappa X(\nu)(F) = 1-\nu (X\setminus F)$, is a capacity as well. It is called the dual
capacity (or conjugate capacity ) to $\nu$. The mapping $\kappa X : MX \to MX$ is a homeomorphism and an involution \cite{NZ}. Moreover, $\nu$ is a necessity capacity if and only if $\kappa X(\nu)$ is a possibility capacity. This implies in particular that $\nu\in M_\cup X$ iff $\nu$ satisfy the simpler requirement that $\nu(A\cup B)=\max\{\nu(A),\nu(B)\}$. It is easy to check that $M_\cap X$ and $M_\cup X$ are closed subsets of $MX$.

For each capacity $\nu$ we consider an upper semicontinuous function $[\nu]:X\to [0,1]$ that sends each $x\in X$ to $\nu(x)$.
Observe that for a possibility capacity $\nu\in M_\cup X$ and a closed set $F\subset X$ we have $\nu(F) =
\max\{\nu(x) | x \in F\}$, and $\nu$ is completely determined by its values on singletons. It means that $\nu$ is completely determined by the function $[\nu]$.
 Conversely, each
upper semicontinuous function $f:X\to I$ with $\max f = 1$ determines a possibility capacity $(f)\in M_\cup X$
by the formula $(f)(F) = \max\{f(x) | x \in F\}$, for a closed subset $F$ of $X$.

 For a continuous map of compacta $f:X\to Y$ we define the map $Mf:MX\to MY$ by the formula $Mf(\nu)(A)=\nu(f^{-1}(A))$ where $\nu\in MX$ and $A\in\F(Y)$. The map $Mf$ is continuous.  In fact, this extension of the construction $M$ defines the capacity functor in the category of compacta and continuous maps. The categorical technics are very useful for investigation of capacities on compacta (see \cite{NZ} for more details). We try to avoid the formalism of category theory in this paper,  but we follow the main ideas of such approach.

\section{Tensor products of capacities} For a continuous map of compacta $f:X\to Y$ we define the map $Mf:MX\to MY$ by the formula $Mf(\nu)(A)=\nu(f^{-1}(A))$ where $\nu\in MX$ and $A\in\F(Y)$. The map $Mf$ is continuous.  In fact, this extension of the construction $M$ defines the capacity functor in the category of compacta and continuous maps. The categorical technics are very useful for investigation of capacities on compacta (see \cite{NZ} for more details). We try to avoid the formalism of category theory in this paper,  but we follow the main ideas of such approach.

The tensor product operation of probability measures is well known and very useful partially for investigation of the spaces of probability measures on compacta (see for example Chapter 8 from \cite{FF}). General categorical definition of tensor product for any functor was given in \cite{BR}. Applying this definition to the capacity functor we obtain that a tensor product of capacities on compacta $X_1$ and $X_2$ is  a continuous map $$\otimes:MX_1\times MX_2\to M(X_1\times X_2)$$ such that for each $i\in\{1,2\}$ we have $M(p_i)\circ\otimes= \pr_i$ where $p_i:X_1\times X_2\to X_i$, $\pr_i:MX_1\times MX_2\to MX_i$ are the corresponding projections.

A tensor product for capacities was introduced in \cite{KZ}.  This definition is based on the capacity monad structure.   An explicit formula for evaluating  tensor product of capacities was given in \cite{R4} omitting the formalism of category theory. For $\mu_1\in MX_1$, $\mu_2\in MX_2$ and $B\in\F(X_1\times X_2)$ we put $$\mu_1\otimes\mu_2(B)=\max\{t\in[0,1]\mid\mu_1(\{x\in X_1\mid \mu_2(p_2((\{x\}\times X_2)\cap B))\ge t\})\ge t\}.$$

The problem of multiplication of capacities was deeply considered in the possibility theory and its application to the game theory and the decision making theory where the term joint possibility distribution is used. A standard choice  of a joint possibility distribution is based on the minimum operation (see for example \cite{HM}). For $\mu_1\in M_\cup X_1$, $\mu_2\in M_\cup X_2$ and $(x,y)\in X_1\times X_2$ we put $$[\mu_1\otimes\mu_2](x,y)=[\mu_1](x)\wedge[\mu_2](y).$$ (Let us remind that by $[\nu]$ we denote the density of a possibility capacity $\nu$.) The coincidence of both definitions is proved in \cite{R5}. So, the difference is only in terms. It is also shown in \cite{R5} that  the notion of tensor product coincides with the method of aggregation of capacities considered in \cite{Df}.

 A more general approach is also used where the minimum operation is changed by any t-norm (see for example \cite{DP2}). We will use this definition in our paper but we  prefer the term tensor product.

Remind that triangular norm $\ast$ is a binary operation on the closed unit interval $[0,1]$ which is associative, commutative, monotone and $s\ast 1=s$ for each  $s\in [0,1]$ \cite{PRP}. We consider only continuous t-norms in this paper.

So, we fix a continuous t-norm $\ast$ and consider a tensor product $\circledast:M_\cup X_1\times M_\cup X_2\to M_\cup(X_1\times X_2)$ generated by $\ast$ defined as follows. For possibility capacities $\mu_1\in M_\cup X_1$, $\mu_2\in M_\cup X_2$ and $(x,y)\in X_1\times X_2$ we put $$[\mu_1\circledast\mu_2](x,y)=[\mu_1](x)\ast[\mu_2](y).$$

We also can generalize the above mentioned formula from \cite{R4}. For capacities $\mu_1\in MX_1$, $\mu_2\in MX_2$ and $B\in\F(X_1\times X_2)$ we put $$\mu_1\widetilde{\circledast}\mu_2(B)=\sup\{t\in[0,1]\mid\mu_1(\{x\in X_1\mid \mu_2(p_2((\{x\}\times X_2)\cap B))\ge t\})\ast t\}.$$ It was shown in \cite{R5}  that  both definitions coincide in the class of possibility capacities.

It was noticed in \cite{KZ} that we can extend the definition of tensor product to any finite number of factors by induction. It is also true for the tensor product generated by a continuous norm.

\begin{lemma}\label{CP} The map $\widetilde{\circledast}:M(X_1)\times\dots\times M(X_n)\to M(X_1\times\dots\times X_n)$ is continuous.
\end{lemma}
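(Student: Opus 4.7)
The plan is to argue by induction on $n$, with the base case $n=2$ being where the real work lies. For the inductive step, the $n$-ary tensor product factors as
$$MX_1\times\dots\times MX_n \xrightarrow{\Id\times\widetilde{\circledast}} MX_1\times M(X_2\times\dots\times X_n) \xrightarrow{\widetilde{\circledast}} M(X_1\times\dots\times X_n),$$
so continuity follows from the binary case applied to the pair $X_1$ and $X_2\times\dots\times X_n$ once the binary case is established.

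For $n=2$, continuity is verified by showing that preimages of the subbasic open sets in $M(X_1\times X_2)$ are open in $MX_1\times MX_2$. The subbase consists of the sets $O_-(F,a)$ for closed $F$ and $O_+(U,a)$ for open $U$. The $O_+(U,a)$ case reduces to the closed-set case via the defining formula $\nu(U)=\sup\{\nu(K)\mid K\subset U\text{ closed}\}$ together with monotonicity of $\widetilde{\circledast}$: if $\mu_1\widetilde{\circledast}\mu_2(U)>a$ then some closed $K\subset U$ already gives $\mu_1\widetilde{\circledast}\mu_2(K)>a$, and preserving this strict inequality on a neighborhood is dual to the $O_-$ problem via the homeomorphism $\kappa$.

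So the main step is the following. Given a closed $F\subset X_1\times X_2$, $a\in[0,1]$, and $(\mu_1,\mu_2)$ with $\mu_1\widetilde{\circledast}\mu_2(F)<a$, I want to find a neighborhood on which the strict inequality persists. Writing $F_x=p_2((\{x\}\times X_2)\cap F)$ and $G_t(\mu_2):=\{x\in X_1\mid \mu_2(F_x)\ge t\}$, the supremum definition forces every candidate $t$ to violate the inequality that realizes the supremum, so (after using continuity of $\ast$) there is a small $\e>0$ and a covering by finitely many values $t_1,\dots,t_k$ for which $\mu_1(G_{t_j}(\mu_2))\ast t_j<a-\e$. The key auxiliary fact is that for fixed closed $F$ the set $G_t(\mu_2)$ is closed, and its dependence on $(t,\mu_2)$ is upper-semicontinuous in the Vietoris sense: any open neighborhood $O$ of $G_t(\mu_2)$ contains $G_{t'}(\mu_2')$ for all $t'$ slightly below $t$ and all $\mu_2'$ sufficiently close to $\mu_2$. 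Combining this with axiom~(3) in the definition of an upper-semicontinuous capacity (applied in $MX_1$) and the continuity of $\ast$, one extracts a finite intersection of subbasic neighborhoods of $(\mu_1,\mu_2)$ on which $\mu_1'\widetilde{\circledast}\mu_2'(F)<a$ holds uniformly.

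The main obstacle is the simultaneous dependence on the auxiliary parameter $t$ and on $\mu_2$: the supremum in the definition of $\widetilde{\circledast}$ ranges over the whole interval $[0,1]$ and the sets $G_t(\mu_2)$ only behave semicontinuously in both arguments, not continuously. Continuity of the t-norm $\ast$ is what bridges this gap, letting strict inequalities survive under the perturbation from $t$ to $t'$ and from $\mu_2$ to $\mu_2'$; compactness of $[0,1]$ then lets one reduce to a finite cover of witnesses.
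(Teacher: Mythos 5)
Your overall architecture — induction on $n$ via the factorization of the $n$-ary product through the binary one, and verification of continuity on the subbase $O_-(F,a)$, $O_+(U,a)$ — is reasonable, and the $O_-$ half is essentially sound: since $F$ is closed the slice map $x\mapsto p_2((\{x\}\times X_2)\cap F)$ is upper semicontinuous, hence $x\mapsto\mu_2(F_x)$ is upper semicontinuous and $G_t(\mu_2)$ is closed; the joint Vietoris upper semicontinuity of $G_t(\mu_2)$ in $(t,\mu_2)$ does hold (its proof needs axiom (3) applied in $M(X_2)$ together with a finite subcover of the compact set $X_1\setminus O$, not only axiom (3) in $M(X_1)$ as you indicate, but that is a matter of detail); and discretizing $[0,1]$, using monotonicity of $t\mapsto G_t$ and uniform continuity of $\ast$, closes that case. (For the record, the paper states Lemma \ref{CP} without proof, so there is no argument of the author's to compare yours against.)

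The genuine gap is in the $O_+(U,a)$ case. After passing to a closed $K\subset U$ with $\mu_1\widetilde{\circledast}\mu_2(K)>a$, you assert that this strict inequality persists on a neighborhood of $(\mu_1,\mu_2)$ and that this is ``dual to the $O_-$ problem via $\kappa$''. Neither claim holds. For a \emph{fixed} closed $K$ the map $(\mu_1,\mu_2)\mapsto\mu_1\widetilde{\circledast}\mu_2(K)$ is not lower semicontinuous: taking $X_2$ a singleton it reduces to $\mu_1\mapsto\mu_1(p_1(K))$, and for $K$ a single point $\{(x,\ast)\}$ the Dirac capacities $\delta_{x_n}\to\delta_x$ with $x_n\ne x$ give value $0$ along the net but $1$ at the limit — this is precisely why the subbase uses closed sets for $O_-$ but open sets for $O_+$. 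Moreover $\kappa$ does not intertwine $\widetilde{\circledast}$ with itself, so dualizing does not turn the problem into an instance of the $O_-$ case. The repair must exploit the openness of $U$: from $\mu_1(G_{t_0}(K,\mu_2))\ast t_0>a$ choose $t_1<t_0$ and $c_1<\mu_1(G_{t_0}(K,\mu_2))$ with $c_1\ast t_1>a$; for each $x$ in the compact set $G_{t_0}(K,\mu_2)$ use $\mu_2(U_x)\ge t_0>t_1$ to find an open $V_x$ with $\Cl V_x\subset U_x$ and $\mu_2(V_x)>t_1$, shrink to tubes $\Cl N_x\times\Cl V_x\subset U$, extract a finite subcover by the $N_x$, and let $K'\subset U$ be the union of the corresponding closed tubes. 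The subbasic neighborhoods $O_+(V_{x_j},t_1)$ of $\mu_2$ and $O_+(W,c_1)$ of $\mu_1$ (with $\Cl W\subset\bigcup_j N_{x_j}$) then yield $\mu_1'\widetilde{\circledast}\mu_2'(U)\ge\mu_1'\widetilde{\circledast}\mu_2'(K')\ge c_1\ast t_1>a$ on a neighborhood, which is what the $O_+$ case actually requires.
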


\begin{lemma}\label{P} Let $X_i$ be a compactum, $A_i\in\F(X_i)$ and $\mu_i\in MX_i$ such that $\mu_i(X_i\setminus A_i)=0$ for each $i\in \{1,\dots,n\}$. Then $\widetilde{\circledast}_{i=1}^n\mu_i(\prod_{i=1}^n X_i\setminus\prod_{i=1}^n A_i)=0$.
\end{lemma}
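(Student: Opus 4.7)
My plan is to proceed by induction on $n$, using the inductive extension of the binary tensor product to higher arities. The base case $n=1$ is simply the hypothesis $\mu_1(X_1\setminus A_1)=0$.

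For the inductive step, write $Y=\prod_{i=1}^{n-1}X_i$ and $A=\prod_{i=1}^{n-1}A_i$, and set $\nu=\widetilde{\circledast}_{i=1}^{n-1}\mu_i$, so that $\widetilde{\circledast}_{i=1}^{n}\mu_i=\nu\,\widetilde{\circledast}\,\mu_n$ as a capacity on $Y\times X_n$. The inductive hypothesis yields $\nu(Y\setminus A)=0$, and since $Y\setminus A$ is open, the standard extension of a capacity to open sets forces $\nu(C)=0$ for every closed $C\subseteq Y\setminus A$. Since $\prod_{i=1}^{n}X_i\setminus\prod_{i=1}^{n}A_i$ coincides with the open set $(Y\times X_n)\setminus(A\times A_n)$, by the same extension convention it suffices to prove $(\nu\,\widetilde{\circledast}\,\mu_n)(K)=0$ for every closed $K\subseteq(Y\times X_n)\setminus(A\times A_n)$.

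Fix such a $K$ and introduce the slice function $f\colon Y\to[0,1]$ defined by $f(y)=\mu_n(p_2((\{y\}\times X_n)\cap K))$, which is exactly the quantity appearing inside the formula defining $\nu\,\widetilde{\circledast}\,\mu_n(K)$. A routine neighbourhood argument using property~(3) in the definition of a capacity shows that $f$ is upper semicontinuous, so $\{y\in Y\mid f(y)\ge t\}$ is closed in $Y$ for every $t\in[0,1]$. The crucial geometric observation is: if $y\in A$, then the slice $\{x_n\in X_n\mid(y,x_n)\in K\}$ is a closed subset of $X_n\setminus A_n$ (because $K$ is disjoint from $A\times A_n$), and combining this with $\mu_n(X_n\setminus A_n)=0$ via the open-set extension of $\mu_n$ forces $f(y)=0$. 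Consequently, for every $t>0$ the closed level set $\{y\mid f(y)\ge t\}$ sits inside $Y\setminus A$, and the inductive hypothesis gives $\nu(\{y\mid f(y)\ge t\})=0$.

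Reading the defining formula as $(\nu\,\widetilde{\circledast}\,\mu_n)(K)=\sup_{t\in[0,1]}\nu(\{y\mid f(y)\ge t\})\ast t$, the term at $t=0$ vanishes because $s\ast 0=0$, and every term with $t>0$ vanishes because $0\ast t=0$; the supremum is therefore $0$, closing the induction. The main technical point is the upper semicontinuity of the slice function $f$, which makes the level sets $\{f\ge t\}$ closed and so renders the inductive hypothesis, in its open-set form for $\nu$, applicable to them; the continuous t-norm plays no subtler role than the boundary identity $0\ast t=t\ast 0=0$, which every t-norm satisfies.
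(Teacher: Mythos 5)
Your proof is correct and takes essentially the same route as the paper's: for each $t>0$ the level set $\{y\mid f(y)\ge t\}$ of the slice function lies in the complement of the product of the $A_i$'s, so its capacity vanishes by the inductive hypothesis and every term $\nu(\{f\ge t\})\ast t$ in the supremum is $0$. The paper only writes out the case $n=2$ and appeals to induction; you additionally make explicit the upper semicontinuity of the slice function (which the paper leaves implicit in order for the defining formula to make sense), but this is a refinement, not a different argument.
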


\begin{proof} Consider the case $n=2$.   Let $B$ any compact subset of $(X_1\times X_2)\setminus(A_1\times A_2)$. For any $t>0$ consider the set $K_t=\{x\in X_1\mid \mu_2(p_2((\{x\}\times X_2)\cap B))\ge t\}$. If $x\in A_1$, then $p_2((\{x\}\times X_2)\cap B)\subset X_2\setminus A_2$, hence $x\notin K_t$. Thus $K_t\subset X\setminus A_1$ and we obtain $\mu_1\widetilde{\circledast}\mu_2(B)=0$.

The general case could be obtained by induction.
\end{proof}

\section{Equilibrium under uncertainty with fuzzy payoff}\label{equil}

Let us describe the fuzzy integral generated by a continuous t-norm $\ast$ with respect to a capacity $\mu\in MX$. Such integrals  are called t-normed integrals and were studied in \cite{We1}, \cite{We2}, \cite{Sua} and \cite{R5}. Denote $\varphi_t=\varphi^{-1}([t,+\infty))$ for each $\varphi\in C(X,[0,1])$ and $t\in[0,1]$.
For a continuous t-norm $\ast$ and a  function $f\in  C(X,[0,1])$ the corresponding t-normed integral is defined by the formula
$$\int_X^{\vee\ast} fd\mu=\max\{\mu(f_t)\ast t\mid t\in[0,1]\}.$$
Let us remark that  existence of maximum in the previous definition follows from the semicontinuity of the capacity $\mu$. If we consider a partial case when t-norm is the minimum operation, we obtain the Sugeno integral.

 Now, we are going to introduce notion of  equilibrium under uncertainty for games where belief of each player about a choice of the strategy by the other player is a capacity. We follow definitions and denotation from \cite{EK} with the only difference that we use the t-normed integral for expected payoff instead  the Choquet integral. Our approach is a generalization of \cite{R4} where expected payoff was expressed by the Sugeno integral.

 We consider a $n$-players game $p:X=\prod_{i=1}^n X_i\to[0,1]^n$ with compact Hausdorff spaces of strategies $X_i$. We assume that the function $p$ is continuous. The coordinate function $p_i:X\to [0,1]$ we call payoff function of $i$-th player. For $i\in\{1,\dots,n\}$ we denote by  $X_{-i}=\prod_{j\neq i} X_j$  the set of strategy combinations which players other
than $i$ could choose. For $x\in X$ the corresponding point in $X_{-i}$ we denote by $x_{-i}$.  In contrast to standard game theory, beliefs of $i$-th player about opponents'
behaviour are represented by non-additive measures (or capacities) on $X_{-i}$.

Let $\ast$ be a continuous t-norm.
For $i\in\{1,\dots,n\}$  we consider the expected payoff function $P^\ast_i:X_i\times MX_{-i}\to[0,1]$ defined as follows $P^\ast_i(x_i,\nu)=\int_{X_{-i}}^{\vee\ast} p_{i}^{x_i}d\nu$ where the function $p_{i}^{x_i}:X_{-i}\to[0,1]$ is defined by the formula $p_{i}^{x_i}(x_{-i})=p_{i}(x_i,x_{-i})$, $x_i\in X_i$ and $\nu\in MX_{-i}$.

We are going to prove continuity of $P^\ast_i$. We will need some notations and a  technical lemma from \cite{R4}. Let $f:X\times Y\to[0,1]$ be a  function. Consider any $x\in X$ and $t\in[0,1]$. Denote $f^x_{\le t}=\{y\in Y\mid f(x,y)\le t\}$. We also will use analogous notations $f^x_{\ge t}$, $f^x_{<t}$ and $f^x_{>t}$. Let us remark that we already used the shorter notation $f_t=f^x_{\ge t}$ for the case when $X=\{x\}$.

\begin{lemma}\cite{R4}\label{C0} Let $f:X\times Y\to[0,1]$ be a continuous function on the product $X\times Y$ of compacta $X$ and $Y$. Then for each $x\in X$, $t\in[0,1]$ and $\delta>0$ there exists an open neighborhood $O$ of $x$ such that $f^z_{\le t}\subset f^x_{<t+\delta}$ ($f^z_{\ge t}\subset f^x_{>t-\delta}$) for each $z\in O$.
\end{lemma}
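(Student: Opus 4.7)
The plan is to handle the two claimed inclusions separately by the same compactness argument (the ``tube lemma'' idea), since they are entirely symmetric. I will sketch the first, $f^z_{\le t}\subset f^x_{<t+\delta}$, in detail and note that the second is analogous. The key reformulation is to contrapose in the variable $y$: the inclusion is equivalent to saying that for every $y\in Y$ with $f(x,y)\ge t+\delta$ we have $f(z,y)>t$. So I would introduce the closed (hence compact) set
$$K=\{y\in Y\mid f(x,y)\ge t+\delta\}\subset Y,$$
on which $f(x,\cdot)$ is bounded below by $t+\delta>t$.

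Next, for each $y\in K$, continuity of $f$ at $(x,y)$ gives basic open sets $V_y\subset X$ and $W_y\subset Y$ with $(x,y)\in V_y\times W_y$ and $f>t$ throughout $V_y\times W_y$. Since $K$ is compact, I would extract a finite subcover $W_{y_1},\dots,W_{y_k}$ of $K$ and set $O=\bigcap_{j=1}^{k}V_{y_j}$, an open neighborhood of $x$. For any $z\in O$ and any $y\in K$ there is some $j$ with $y\in W_{y_j}$, and then $(z,y)\in V_{y_j}\times W_{y_j}$ yields $f(z,y)>t$. This is exactly the contrapositive we wanted, so $f^z_{\le t}\subset f^x_{<t+\delta}$ holds for every $z\in O$.

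For the parenthetical inclusion $f^z_{\ge t}\subset f^x_{>t-\delta}$, I would run the dual construction: take the closed set $K'=\{y\in Y\mid f(x,y)\le t-\delta\}$, for each $y\in K'$ find a product neighborhood $V'_y\times W'_y$ of $(x,y)$ on which $f<t$, extract a finite subcover $W'_{y_1},\dots,W'_{y_m}$, and set $O'=\bigcap_{j=1}^m V'_{y_j}$. If one needs both inclusions to hold for the same $z$, one simply intersects $O$ and $O'$.

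The only real work is passing from pointwise continuity of $f$ at $(x,\cdot)$ to a uniform choice of $X$-neighborhood valid on an entire compact set of $y$'s, and the finite subcover trick dispatches this cleanly. Boundary cases are benign: if $t+\delta>1$ then $K=\emptyset$ and any neighborhood $O$ works; similarly if $t-\delta<0$ then $K'=\emptyset$. So the argument requires no case split and the proof is essentially a one-paragraph compactness exercise.
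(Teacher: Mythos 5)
Your argument is correct: the contrapositive reformulation, the compact set $K=\{y\in Y\mid f(x,y)\ge t+\delta\}$, and the tube-lemma finite-subcover step give exactly the required neighborhood $O$, and the dual case is indeed symmetric. The paper does not reproduce a proof of this lemma (it is quoted from \cite{R4}), but your compactness argument is the standard and expected one, so there is nothing to flag.
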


\begin{lemma}\label{C} The map $P^\ast_i$ is continuous.
\end{lemma}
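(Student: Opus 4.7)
The plan is to prove continuity at an arbitrary fixed point $(x_i,\nu)\in X_i\times MX_{-i}$ by building, for every $\varepsilon>0$, a neighborhood on which $P^\ast_i$ stays within $\varepsilon$ of $a=P^\ast_i(x_i,\nu)$. Since $\ast$ is uniformly continuous on $[0,1]^2$, I would fix a gauge $\delta>0$ such that perturbing either argument of $\ast$ by $\delta$ changes the result by at most $\varepsilon/4$, and then choose a grid $0=t_0<t_1<\cdots<t_N=1$ of mesh less than $\delta$. The monotone function $g(t)=\nu((p_i^{x_i})_t)$ has at most countably many discontinuities, so the interior grid points may in addition be taken at continuity points of $g$, ensuring that $g(t_k\pm\delta')$ is close to $g(t_k)$ whenever $\delta'>0$ is small.

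For each level $t_k$, Lemma \ref{C0} applied to the continuous function $p_i$ supplies a neighborhood $V_k$ of $x_i$ such that for every $z\in V_k$ one has the sandwich $(p_i^{x_i})_{t_k+\delta'}\subset(p_i^z)_{t_k}\subset(p_i^{x_i})_{t_k-\delta'}$; then $V=\bigcap_{k=0}^N V_k$ removes the $z$-dependence. On the capacity side, the subbasic neighborhoods $O_-((p_i^{x_i})_{t_k-\delta'},\nu((p_i^{x_i})_{t_k-\delta'})+\eta)$ together with $O_+(\{p_i^{x_i}>t_k+\delta'\},\nu(\{p_i^{x_i}>t_k+\delta'\})-\eta)$, intersected over $k$, define a neighborhood $W$ of $\nu$ on which $\mu((p_i^{x_i})_{t_k-\delta'})$ is bounded above within $\eta$ of its $\nu$-value and $\mu(\{p_i^{x_i}>t_k+\delta'\})$ is bounded below within $\eta$ of its $\nu$-value.

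Chaining the two sandwich estimates gives, uniformly for $(z,\mu)\in V\times W$, that $\mu((p_i^z)_{t_k})$ lies within $O(\varepsilon)$ of $\nu((p_i^{x_i})_{t_k})$ for every $k$; combined with the grid discretization of the supremum defining the t-normed integral and the uniform continuity of $\ast$, this forces $|P^\ast_i(z,\mu)-a|<\varepsilon$. The main obstacle is that $\mu\mapsto\mu(F)$ is merely upper-semicontinuous for closed $F$ and $\mu\mapsto\mu(U)$ is merely lower-semicontinuous for open $U$, so one cannot directly equate $\mu((p_i^z)_{t_k})$ with $\nu((p_i^{x_i})_{t_k})$; the device is to interpose a fixed closed superset and a fixed open subset whose $\nu$-values are close to $g(t_k)$, exploiting the left continuity of $g$ (which is automatic from upper-semicontinuity of $\nu$ and continuity of $p_i^{x_i}$) and the right continuity of $g$ at the chosen grid points.
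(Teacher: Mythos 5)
Your proposal follows essentially the same route as the paper's proof: discretize $[0,1]$ by a grid whose mesh is dictated by the uniform continuity of $\ast$, use Lemma \ref{C0} to sandwich the level sets $(p_i^z)_t$ between level sets of $p_i^{x_i}$ at nearby levels, and control the capacity values on a finite intersection of subbasic neighborhoods of $\nu$ ($O_-$ with closed sets for the upper estimate, $O_+$ with open sets for the lower one). The one place where your write-up overreaches is the claim that the two-sided estimate on $\mu((p_i^z)_{t_k})$ holds \emph{for every} $k$: at $t_N=1$ the interposed open set $\{p_i^{x_i}>1+\delta'\}$ is empty and right continuity of $g$ at $1$ cannot be arranged when $g(1)>0$, so the lower control genuinely fails there (for instance when $p_i^{x_i}$ attains the value $1$ only at a point carrying full $\nu$-mass, a nearby $z$ may have $(p_i^z)_1=\emptyset$). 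This does not sink the argument, because the lower bound on the maximum defining the integral only needs the estimate at a grid point obtained by rounding the maximizer \emph{down}, hence at some level strictly below $1$; the paper sidesteps the issue by proving the lower bound only at the single shifted level $d-\varepsilon/2$ below the maximizer $d$, which is the cleaner way to phrase the repair you would need. Apart from that boundary point the two arguments coincide; indeed your insistence on interposing genuinely open sets in the $O_+$ neighborhoods is more careful than the paper's $V_2$, which is written with the closed set $f^x_{\ge d}$ even though $\mu\mapsto\mu(F)$ is only upper semicontinuous for closed $F$.
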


\begin{proof} Consider any $x\in X_i$ and $\nu_0\in MX_{-i}$ and put $P^\ast_i(x,\nu_0)=s\in[0,1]$. Denote $f=p_{i}$.
Consider any $\delta>0$. Since t-norm $\ast$ is uniformly  continuous, we can choose $\varepsilon>0$ such that $|r\ast l-p\ast t|<\delta/4$ for each $r$, $l$, $p$, $t\in[0,1]$ such that $|r-p|<\varepsilon$ and $|l- t|<\varepsilon$. Choose $n\in\N$ such that $1/n\le\varepsilon$ and put $t_i=i/n$ for $i\in\{0,\dots,n\}$. By Lemma \ref{C0} applied to the continuous function $f:X_i\times X_{-i}\to[0,1]$ we can choose a neighborhood $O_1$ of $x$ such that for each $z\in O_1$ we have $f^z_{\ge t_i}\subset f^x_{\ge t_{i-1}}$ for each $i\in\{1,\dots,n\}$. Put $V_1=\{\nu\in M(X_{-i})\mid \nu(f^x_{\ge t_i})<\nu_0(f^x_{\ge t_i})+\varepsilon\}$ for each $i\in\{0,\dots,n\}$, then $V_1$ is a neighborhood of $\nu_0$.

Consider any  $(z,\nu)\in O_1\times V_1$ and $t\in[0,1]$. Choose $i\in\{0,\dots,n-1\}$ such that $t\in[t_i,t_{i+1}]$. Since $t_i\le t$ and $|t_i-t|<\varepsilon$,  we have $\nu(f^z_{\ge t})\ast t\le\nu(f^z_{\ge t_i})\ast t_i+\delta/4$.

If $i=0$, we have  $\nu(f^z_{\ge t})\ast t<\delta/4\le s+\delta/4$.

Consider the case when $i>0$. Since $z\in O_1$ and $\ast$ is monotone, we have $\nu(f^z_{\ge t_i})\ast t_i+\delta/4\le\nu(f^x_{\ge t_{i-1}})\ast t_{i-1}+\delta/4$.

If $\nu(f^x_{\ge t_{i-1}})\le\nu_0(f^x_{\ge t_{i-1}})$, we have $\nu(f^x_{\ge t_{i-1}})\ast t_{i-1}+\delta/4\le\nu_0(f^x_{\ge t_{i-1}})\ast t_{i-1}+\delta/4\le s+\delta/4$.

If $\nu(f^x_{\ge t_{i-1}})>\nu_0(f^x_{\ge t_{i-1}})$, we have $|\nu(f^x_{\ge t_{i-1}})-\nu_0(f^x_{\ge t_{i-1}})|$, because $\nu\in V_1$. Then we obtain $\nu(f^x_{\ge t_{i-1}})\ast t_{i-1}+\delta/4\le\nu_0(f^x_{\ge t_{i-1}})\ast t_{i-1}+\delta/2\le s+\delta/2$.

Therefore we have $\nu(f^z_{\ge t})\ast t\le s+\delta/2$ for each $t\in[0,1]$. Hence $P_i(z,\nu)<s+\delta$ for each  $(z,\nu)\in O_1\times V_1$.

Choose $d\in[0,1]$ such that $\nu_0(f^x_{\le d})\ast d=\max\{\mu(f^x_{\le t})\ast t\mid t\in[0,1]\}=s$.

By Lemma \ref{C0} there exists a neighborhood $O_2$ of $x$ such that for each $z\in O_2$ we have $f^z_{\le d-\frac\varepsilon2}\subset f^x_{<d-\frac\varepsilon4}$. Put $V_2=\{\nu\in M(X_{-i})\mid \nu(f^x_{\ge d})>\nu_0(f^x_{\ge d})-\varepsilon\}$, then $V_2$ is a neighborhood of $\nu_0$.

Then for each $(z,\nu)\in O_2\times V_2$ we have $\nu(f^z_{\ge d-\frac\varepsilon2})\ge\nu(f^z_{>d-\frac\varepsilon2})=\nu(X_{-i}\setminus f^z_{\le d-\frac\varepsilon2})\ge\nu(X_{-i}\setminus f^x_{<d-\frac\varepsilon4})=\nu(f^x_{\ge d-\frac\varepsilon4})\ge\nu(f^x_{\ge d})>\nu_0(f^x_{\ge d})-\varepsilon$.

If $\nu(f^z_{\ge d-\frac\varepsilon2})\ge\nu_0(f^x_{\ge d})$, then $\nu(f^z_{\ge d-\frac\varepsilon2})\ast(d-\frac\varepsilon2)\ge\nu_0(f^x_{\ge d})\ast(d-\frac\varepsilon2)>s-\delta/4$.

Consider the case $\nu(f^z_{\ge d-\frac\varepsilon2})<\nu_0(f^x_{\ge d})$. We also have $\nu(f^z_{\ge d-\frac\varepsilon2})>\nu_0(f^x_{\ge d})-\varepsilon$. Hence $\nu(f^z_{\ge d-\frac\varepsilon2})\ast(d-\frac\varepsilon2)>s-\delta/4$. So, we obtain that  $P^\ast_i(z,\nu)>s-\delta$ for each $(z,\nu)\in O_2\times V_2$ and the map $P^\ast_i$ is continuous.

\end{proof}

For $\nu_i\in M(X_{-i})$ denote by $$R^\ast_i(\nu_i)=\{x\in X_i\mid P^\ast_i(x,\nu_i)=\max\{P^\ast_i(z,\nu_i)\mid z\in X_i\}\}$$ the best response correspondence
of player $i$ given belief $\nu_i$. The set $R_i(\nu_i)$ is well defined and compact by Lemma \ref{C}.

A belief system $(\nu_1,\dots,\nu_n)$, where $\nu_i\in M(X_{-i})$, is called {\it an equilibrium under uncertainty respect $\ast$ with fuzzy payoff} if for all $i$ we have $\nu_i(X_{-i}\setminus\prod_{j\ne i}R^\ast_j(\nu_j))=0$.

    The main goal of this paper is to prove the existence of such equilibrium where corresponding belief system consist of possibility measures. Since the space $M_\cup X$ has no natural linear convex structure, we will use some another natural convexity structure on the space of capacities described in the next section.

\section{A convexity on the space of capacities}

Consider a compactum $X$. There exists a natural lattice structure on $MX$ defined as follows $\nu\vee\mu(A)=\max\{\nu(A),\mu(A)\}$ and $\nu\wedge\mu(A)=\min\{\nu(A),\mu(A)\}$ for each closed subset $A\subset X$ and $\nu$, $\mu\in MX$ (see for instance Theorem 7.1 in \cite{Ob}).The lattice $MX$ has a greatest  element and a a least element defined as $\mu_{1X}(A)=1$ for each $A\neq \emptyset$, $\mu_{1X}(\emptyset)=0$ and $\mu_{0X}(A)=0$ for each $A\neq X$, $\mu_{0X}(X)=1$.

Following \cite{W} we call a family $\C\subset\F(X)$ a convexity on a compactum $X$ if $\C$ is stable for intersection
and contains $X$ and the empty set. Elements of $\C$ are called
$\C$-convex (or simply convex). Let us remark that this definition  is different from the definition of convexity in \cite{vV} where the abstract  convexity theory is covered from the axioms to applications in different areas. We consider here only closed convex sets and we do not consider the condition that union of each nested subfamily of $\C$ is in $\C$. In fact considered in this paper convexities have such property, but we do not need it for our purposes. So, we use the simpler definition from \cite{W}.

A convexity $\C$ on $X$ is called $T_4$ (normal) if for each disjoint $C_1$, $C_2\in
\C$ there exist $S_1$, $S_2\in\C$ such that $S_1\cup S_2=X$,
$C_1\cap S_2=\emptyset$ and $C_2\cap S_1=\emptyset$.

The main goal of this section is to choose an appropriate normal convexity on  the compactum $M_\cup X$. There is no natural linear structure on $M_\cup X$, so, we can not use classical linear convexity. The idempotent max-plus convexity on the space of homogeneous possibility (or necessity) capacities was considered in \cite{FGM}. Since we deal with fuzzy integrals defined by the maximum operation and a t-norm $\ast$, it seems to be natural to consider the idempotent  max-$\ast$ convexity. But we do not know if such convexity is normal. So,  we consider here a coarser convexity on $M_\cup X$ used in \cite{R4}   which follows from a general categorical approach developed in \cite{R1} and \cite{R3}.

For $\nu$, $\mu\in MX$ we denote $[\nu,\mu]=\{\alpha\in MX\mid \nu\wedge\mu\le\alpha\le\nu\vee\mu\}$. It is easy to see that $[\nu,\mu]$ is a closed subset of $MX$. We consider   on the compactum $M_\cup X$ the  convexity  $\C_{\cup X}=\{C\cap M_\cup X\mid C\in \C_X\}$. It is easy to see that a closed subset $A$ of $M_\cup X$ belongs to the family  $\C_{\cup X}$ if and only if $[\bigwedge A,\bigvee A]\cap M_\cup X=A$. Let us remark that  $\bigvee A\in M_\cup X$ but we can not state it about  $\bigwedge A$.

The following lemma was proved in \cite{R4}.

\begin{lemma}\cite{R4}\label{norm} The convexity $\C_X$ is normal.
\end{lemma}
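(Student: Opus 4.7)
The plan is to exploit the order-interval description of $\C_X$-convex sets (a closed $A\subset MX$ should be $\C_X$-convex exactly when $A=[\bigwedge A,\bigvee A]$, the obvious analogue of the characterization given for $\C_{\cup X}$ in the previous paragraph) together with the observation that, for a fixed closed $F\subset X$, evaluation $\mu\mapsto\mu(F)$ is a lattice homomorphism $MX\to[0,1]$, in the sense that $(\nu\vee\mu)(F)=\max\{\nu(F),\mu(F)\}$ and $(\nu\wedge\mu)(F)=\min\{\nu(F),\mu(F)\}$. The strategy is to manufacture the separating convex sets as lattice half-spaces at a cleverly chosen level of a cleverly chosen $F$.

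First, given disjoint $C_1,C_2\in\C_X$, I set $a_i=\bigwedge C_i$, $b_i=\bigvee C_i$, and write $C_i=[a_i,b_i]$. Disjointness of these two intervals translates, via $[a_1,b_1]\cap[a_2,b_2]=[a_1\vee a_2,b_1\wedge b_2]$, into $a_1\vee a_2\not\le b_1\wedge b_2$ in the lattice $MX$. Unpacking the pointwise definition of the order on capacities, this yields a closed $F\subset X$ with $\max\{a_1(F),a_2(F)\}>\min\{b_1(F),b_2(F)\}$. After possibly swapping the labels of $C_1$ and $C_2$, I assume $a_1(F)>b_2(F)$ and pick a real threshold $t$ with $b_2(F)<t<a_1(F)$.

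The candidate separators are the lattice half-spaces
$$
S_2=\{\mu\in MX:\mu(F)\ge t\},\qquad S_1=\{\mu\in MX:\mu(F)\le t\}.
$$
Their union is visibly $MX$, and the choice of $t$ yields $C_1\cap S_1=\emptyset$ and $C_2\cap S_2=\emptyset$ at once (since $\nu\in C_1$ forces $\nu(F)\ge a_1(F)>t$, while $\mu\in C_2$ forces $\mu(F)\le b_2(F)<t$). The $\C_X$-convexity of each $S_i$ is immediate from the lattice-homomorphism identities above: each is closed under the operation $[\cdot,\cdot]$ and can be presented as an order interval (one endpoint being $\mu_{0X}$ or $\mu_{1X}$, the other a suitable extremal capacity with $F$-value equal to $t$).

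The main obstacle will be verifying that both $S_1$ and $S_2$ are closed in $MX$. The set $S_2$ is closed immediately from the subbase, since $O_-(F,t)$ is open and $\mu\mapsto\mu(F)$ is upper semicontinuous for closed $F$. For $S_1$, the same evaluation is not in general lower semicontinuous, so a direct argument fails. To handle this I would pass to the conjugate via the involutive homeomorphism $\kappa X$, rewriting $\mu(F)\le t$ as $\kappa X(\mu)(X\setminus F)\ge 1-t$ and pulling closedness back along $\kappa X$; an alternative is to shrink $S_1$ to a closed order interval $[\mu_{0X},\beta]$ whose top $\beta$ is an upper-semicontinuous capacity satisfying $\beta(F)=t$, using the strict gap $b_2(F)<t$ to keep $C_2$ inside and the strict gap $t<a_1(F)$ to keep $C_1$ outside. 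Pinning down the correct closed order-interval realization of the ``lower'' half-space is the technical heart of the argument; once this is settled the pair $(S_1,S_2)$ witnesses normality of $\C_X$.
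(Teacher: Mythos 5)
The paper does not reproduce the proof of this lemma (it is quoted from \cite{R4}), so I can only judge your argument on its own terms; it breaks down exactly at the point you yourself flag as ``the technical heart'', and neither of your two proposed repairs closes the gap. The set $S_1=\{\mu\in MX\mid\mu(F)\le t\}$ is in general \emph{not} closed in $MX$: for a closed $F$ the evaluation $\mu\mapsto\mu(F)$ is only upper semicontinuous, so $\{\mu\mid\mu(F)<a\}=O_-(F,a)$ is open but $\{\mu\mid\mu(F)>t\}$ need not be. Concretely, take $X=[0,1]$, $F=\{0\}$ and the Dirac capacities $\delta_x$ (where $\delta_x(A)=1$ iff $x\in A$); checking against the subbase shows $\delta_{1/n}\to\delta_0$ in $MX$, while $\delta_{1/n}(F)=0\le t<1=\delta_0(F)$ for any $t<1$. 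Hence $S_1$ is not closed, so it is not an element of $\C_X$. Your first repair, conjugating by $\kappa X$, only relocates the problem: the condition becomes $\kappa X(\mu)(X\setminus F)\ge 1-t$ with $X\setminus F$ \emph{open}, and $\nu\mapsto\nu(U)$ is lower semicontinuous for open $U$, so $\{\nu\mid\nu(U)\ge 1-t\}$ is again only a countable intersection of the subbasic open sets $O_+(U,a)$, not a closed set; the same Dirac example, transported by the homeomorphism $\kappa X$, witnesses the failure.

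Your second repair --- replacing $S_1$ by a closed down-set $[\mu_{0X},\beta]$ with $\beta(F)=t$ --- is incompatible with the covering requirement $S_1\cup S_2=MX$. That requirement forces $[\mu_{0X},\beta]$ to contain every $\mu$ with $\mu(F)<t$, in particular $\delta_x\le\beta$ for every $x\in X\setminus F$, so $\beta(\{x\})=1$ there; upper semicontinuity of $\beta$ then forces $\beta(\{x\})=1$ for every $x\in F\cap\Cl(X\setminus F)$, and hence $\beta(F)=1\ne t$ whenever $F$ meets the closure of its complement (as in the example above), after which nothing prevents points of $C_1$ from lying below $\beta$. The underlying obstruction is that the closed ``half-spaces'' available in $\C_X$ are up-sets $\{\mu\mid\mu(F)\ge t\}$ cut by closed sets and down-sets $\{\mu\mid\mu(U)\le s\}$ cut by open sets, and a pair of these generically fails to cover $MX$ (a Dirac capacity at a point of $U\setminus F$ violates both conditions). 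So separation cannot be achieved by thresholding a single evaluation; any correct argument must use the regularity axiom (3) of capacities to interpolate between $F$ and an open neighbourhood of it and build both separators as order intervals with genuine capacity endpoints. Everything up to and including your construction and verification of $S_2$ is correct; the lower separator is the missing piece, and it is not a routine one.
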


\section{The main result}

By a multimap (set-valued map) of a set $X$ into a set $Y$ we mean a map $F:X\to 2^Y$. We use the notation $F:X\multimap Y$. If $X$ and $Y$ are topological spaces, then a multimap $F:X\multimap Y$ is called upper semi-continuous (USC) provided for each open set $O\subset Y$ the set $\{x\in X\mid F(x)\subset O\}$ is open in $X$. It is well-known that a multimap between compacta $X$ and $Y$ with closed values  is USC iff its graph is closed in $X\times Y$.

Let  $F:X\multimap X$ be a  multimap. We say that a point $x\in X$ is a fixed point of $F$ if $x\in F(x)$.
The following counterpart of Kakutani theorem for abstract convexity is a partial case of Theorem 3 from \cite{W}.

\begin{theorem}\cite{W}\label{KA} Let $\C$ be a normal convexity on a compactum $X$ such that all convex sets are connected and $F:X\multimap X$ is a USC multimap with values in $\C\setminus\{\emptyset\}$. Then $F$ has a fixed point.
\end{theorem}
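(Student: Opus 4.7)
The plan is to follow the classical Cellina-style strategy for proving Kakutani-type fixed-point theorems: approximate $F$ by continuous single-valued maps, obtain fixed points for each approximation via a Brouwer-type theorem adapted to the abstract convexity, and extract a limit that is a genuine fixed point of $F$. The USC hypothesis together with compact-valued $F$ into the compactum $X$ ensures the graph $\Gamma_F\subset X\times X$ is closed, which is exactly what is needed for the limit step.

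The first step would construct, for each open neighborhood $W$ of the diagonal of $X\times X$, a continuous map $f_W:X\to X$ whose graph lies inside the $W$-neighborhood of $\Gamma_F$. Pick a finite open cover $\{U_1,\dots,U_k\}$ of $X$ fine enough, with representatives $x_j\in U_j$ and $y_j\in F(x_j)$ chosen so that $\{x_j\}\times F(x_j)$ lies well inside $W$ of $\Gamma_F$. Normality of $\C$ permits one to shrink to a cover by $\C$-convex sets $C_j\subset U_j$, and then $f_W(x)$ is defined by a partition-of-unity style assembly of those $y_j$'s for which $x\in C_j$, taking values in the $\C$-convex hull of $\{y_1,\dots,y_k\}$. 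The connectedness of $\C$-convex sets is essential at this point: it is what allows the interpolated value to be well defined and continuous, replacing the role played by barycentric coordinates in linear convexity.

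The second step would apply a Brouwer-type fixed-point theorem in the abstract convexity setting to $f_W$, producing $x_W\in X$ with $f_W(x_W)=x_W$. In linear convexity this is classical via simplicial approximation and Sperner's lemma; here it must be derived from normality (which supplies a KKM-type intersection/separation principle via the pairs $S_1,S_2$ of covering convex sets guaranteed by $T_4$) combined with connectedness of convex sets (which provides the contractibility of convex hulls of finite families needed to run a Sperner-labeling or Lefschetz-type argument on the polytope on which $f_W$ lives).

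Finally, compactness of $X$ yields a cluster point $x^*$ of the net $\{x_W\}_W$ indexed by the directed system of neighborhoods of the diagonal. Since $(x_W,x_W)=(x_W,f_W(x_W))$ lies in the $W$-neighborhood of $\Gamma_F$ and $\Gamma_F$ is closed, we obtain $(x^*,x^*)\in \Gamma_F$, i.e.\ $x^*\in F(x^*)$. The main obstacle is clearly the abstract Brouwer-type theorem used in the second step: extracting a fixed point for a continuous self-map from only normality and connectedness of $\C$-convex sets requires genuine combinatorial work, and this is the hard core of the cited result from \cite{W}. The approximation step and the closed-graph limit argument, by contrast, are largely formal once the right shrinking lemma and partition-of-unity construction are in place.
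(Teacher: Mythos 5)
The paper does not prove this statement at all: it is quoted verbatim as a special case of Theorem~3 of Wieczorek's paper \cite{W}, so there is no internal proof to compare yours against. Judged on its own, your proposal is a reasonable description of the classical Cellina approximation scheme, but it contains two genuine gaps, and they are precisely where all the difficulty of the theorem lives. First, the ``partition-of-unity style assembly'' of the points $y_j$ has no meaning in an abstract convexity: there is no operation that takes convex coefficients and a finite set of points and returns a point of their $\C$-convex hull, let alone one that varies continuously in $x$. Connectedness of convex sets does not supply such an operation -- a connected set need not be contractible or admit any continuous selection structure -- so the claim that connectedness ``allows the interpolated value to be well defined and continuous, replacing the role played by barycentric coordinates'' is an assertion, not an argument. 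Without this, the approximating maps $f_W$ are never actually constructed.

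Second, you explicitly defer the ``abstract Brouwer-type theorem'' for the maps $f_W$ to unspecified ``genuine combinatorial work,'' acknowledging it as the hard core of the cited result. But a proof that outsources its hard core is not a proof. In Wieczorek's actual development the normality ($T_4$) axiom is not used to run a Sperner argument on a polytope carrying $f_W$; it is used to produce Urysohn-type separating structures for disjoint convex sets, which (together with connectedness of convex sets) is what makes the passage between $X$ and nerves of convex covers possible in the first place. So the two steps you treat as ``largely formal'' (the approximate selection) and as ``the hard core'' (the fixed point for $f_W$) are in fact entangled: both require the same missing ingredient, namely a substitute for barycentric coordinates built from normality. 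As it stands, the proposal is a plan for a proof in the linear-convexity setting transplanted to a context where its key constructions are undefined.
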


The convexity $\C_{\cup X}$ is normal. We need also connectedness to apply the previous theorem. A stronger statement was proved in \cite{R4}.

\begin{lemma}\cite{R4}\label{connected} Each element of the convexity $\C_{\cup X}$ is path connected.
\end{lemma}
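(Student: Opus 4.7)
My plan is to show that every $A\in\C_{\cup X}$ is star-shaped with respect to its greatest element $\beta:=\bigvee A$, which immediately yields path connectedness. The paper already observes $\beta\in M_\cup X$, and from $\bigwedge A\le\beta\le\bigvee A$ we get $\beta\in A$; so it suffices to exhibit, for each $\nu\in A$, a continuous path in $A$ from $\nu$ to $\beta$.

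Given such $\nu$, the inequality $\nu\le\beta$ in $MX$ yields $[\nu](x)\le[\beta](x)$ for all $x\in X$ (evaluate on singletons, and recall that a possibility capacity is determined by its density). For $t\in[0,1]$ I will set
$$h_t(x)=\max\bigl\{[\nu](x),\,\min\{t,[\beta](x)\}\bigr\}$$
and let $\nu_t\in M_\cup X$ be the possibility capacity with density $h_t$. The function $h_t$ is upper semicontinuous (both max and min of USC functions are USC, and constants are USC), and $\max_X h_t\ge\max_X[\nu]=1$, so $\nu_t$ is well defined. Moreover $h_0=[\nu]$ and, using $[\nu]\le[\beta]$, $h_1=\max\{[\nu],[\beta]\}=[\beta]$. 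The pointwise bounds $[\nu]\le h_t\le[\beta]$ give $\nu\le\nu_t\le\beta$, hence $\nu_t\in[\bigwedge A,\bigvee A]\cap M_\cup X=A$.

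The remaining point is continuity of $t\mapsto\nu_t$ into $MX$. For a closed $F\subset X$ a direct computation gives
$$\nu_t(F)=\max_{x\in F}h_t(x)=\max\{\nu(F),\,\min\{t,\beta(F)\}\},$$
and for an open $U\subset X$ the analogous formula $\nu_t(U)=\max\{\nu(U),\,\min\{t,\beta(U)\}\}$ holds, using that $\mu(U)=\sup_{x\in U}[\mu](x)$ for possibility capacities. Both expressions are continuous in $t$, so preimages of the subbasic open sets $O_-(F,a)$ and $O_+(U,a)$ are open in $[0,1]$, proving continuity of the path.

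I expect the only step requiring any real work to be the explicit evaluation of $\nu_t$ on closed and open sets; the rest is bookkeeping. These explicit formulas are essentially what forces the $\max/\min$ shape of the interpolation: a naive linear combination $(1-t)[\nu]+t[\beta]$ would not interact cleanly with the $\max$-based structure of possibility capacities and in general would not stay below $[\beta]$ in a way that preserves the convex-interval constraint.
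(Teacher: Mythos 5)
Your proof is correct: connecting each $\nu\in A$ to the top element $\beta=\bigvee A$ via the density interpolation $h_t=\max\{[\nu],\min\{t,[\beta]\}\}$ keeps the whole path inside the order interval $[\bigwedge A,\bigvee A]\cap M_\cup X=A$, and the explicit evaluations $\nu_t(F)=\max\{\nu(F),\min\{t,\beta(F)\}\}$ and $\nu_t(U)=\max\{\nu(U),\min\{t,\beta(U)\}\}$ make continuity against the subbasic sets $O_-(F,a)$ and $O_+(U,a)$ immediate (the empty convex set being vacuously path connected). The paper itself defers the proof to \cite{R4}, and your star-shapedness argument with respect to $\bigvee A$ is essentially the intended one.
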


We use definitions and notations from Section \ref{equil}. The following theorem is a generalization of Theorem 3 from \cite{R4} and arguments in the proofs are the same. But, for sake of completeness we give here a complete proof.

\begin{theorem} Let  $\star$ and $\ast$ be two continuous t-norms. There exists $(\mu_1,\dots,\mu_n)\in M_\cup(X_1)\times\dots\times M_\cup(X_n)$ such that $(\mu_1^*,\dots,$ $\mu_n^*)$ is an   equilibrium under uncertainty respect $\star$ with fuzzy payoff, where $\mu_i^*=\circledast_{j\neq i}\mu_j$.
\end{theorem}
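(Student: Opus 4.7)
The plan is to apply the Kakutani-type Theorem~\ref{KA} to a best-response multimap on the product compactum $Z = M_\cup X_1 \times \dots \times M_\cup X_n$. For each index $i$ I set
$$G_i(\mu_1, \dots, \mu_n) = \{\nu \in M_\cup X_i \mid \nu(X_i \setminus R^\star_i(\mu_i^*)) = 0\},$$
where $\mu_i^* = \circledast_{j \neq i}\mu_j$, and let $G = G_1 \times \dots \times G_n \colon Z \multimap Z$. Any fixed point $(\mu_1,\dots,\mu_n)$ of $G$ satisfies $\mu_j(X_j \setminus R^\star_j(\mu_j^*)) = 0$ for every $j$; Lemma~\ref{P} applied to $\circledast$ (which coincides with $\widetilde{\circledast}$ on possibility capacities) then gives $\mu_i^*(X_{-i} \setminus \prod_{j \neq i} R^\star_j(\mu_j^*)) = 0$ for each $i$, so $(\mu_1^*,\dots,\mu_n^*)$ is the equilibrium we seek.

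To equip $Z$ with a convexity suitable for Theorem~\ref{KA} I take the product convexity whose convex sets are the boxes $\prod_i C_i$ with $C_i \in \C_{\cup X_i}$. Normality is inherited coordinatewise from Lemma~\ref{norm}: two disjoint boxes must disagree in some coordinate $k$, and a normal separating pair in $M_\cup X_k$ extends (by using the full factor $M_\cup X_j$ in the other coordinates) to a normal separating pair of boxes in $Z$. Each box is path connected by Lemma~\ref{connected}, so Theorem~\ref{KA} applies as soon as $G$ is shown to be USC with nonempty convex values.

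Next I check that each $G_i(\mu_1,\dots,\mu_n)$ is a nonempty convex subset of $M_\cup X_i$. Since $P^\star_i(\cdot,\mu_i^*)$ is continuous on the compactum $X_i$ by Lemma~\ref{C}, the set $R^\star_i(\mu_i^*)$ is nonempty and compact, and any Dirac possibility capacity $\delta_x$ with $x \in R^\star_i(\mu_i^*)$ lies in $G_i$. For convexity, for a closed $A \subset X_i$ the set $B_A = \{\nu \in M_\cup X_i \mid \nu(X_i \setminus A) = 0\}$ has $\bigvee B_A$ equal to the indicator possibility capacity $\chi_A$ (taking value $1$ on closed sets meeting $A$ and $0$ otherwise) and $\bigwedge B_A$ equal to the indicator necessity capacity of $A$. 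Every possibility capacity $\alpha$ satisfying $\bigwedge B_A \le \alpha \le \bigvee B_A$ obeys $\alpha(F) = 0$ for $F \subset X_i \setminus A$, hence $\alpha(X_i \setminus A) = 0$, so $[\bigwedge B_A, \bigvee B_A] \cap M_\cup X_i = B_A$ and $B_A \in \C_{\cup X_i}$.

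It remains to verify that $G$ is USC, for which it suffices that each $G_i$ has closed graph. Given $(\mu_1^k,\dots,\mu_n^k) \to (\mu_1,\dots,\mu_n)$ in $Z$ and $\nu^k \to \nu$ with $\nu^k \in G_i(\mu_1^k,\dots,\mu_n^k)$, Lemma~\ref{CP} yields $\circledast_{j\neq i}\mu_j^k \to \mu_i^*$, while Lemma~\ref{C} combined with a standard maximum-theorem argument makes $R^\star_i$ USC. For any open neighborhood $U$ of $R^\star_i(\mu_i^*)$ one then has $R^\star_i(\circledast_{j\neq i}\mu_j^k) \subset U$ eventually, so $\nu^k(X_i \setminus U) = 0$; upper semicontinuity of $\mu \mapsto \mu(X_i \setminus U)$ (which is built into the subbase defining the topology on $MX_i$) passes this to $\nu(X_i \setminus U) = 0$. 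Intersecting over all such $U$ and using normality of the compact Hausdorff space $X_i$ gives $[\nu](x) = 0$ for every $x \notin R^\star_i(\mu_i^*)$, so $\nu \in G_i(\mu_1,\dots,\mu_n)$, and Theorem~\ref{KA} supplies the required fixed point. The most delicate piece I expect is the identification of $\bigwedge B_A$ and $\bigvee B_A$ together with the equality $[\bigwedge B_A, \bigvee B_A] \cap M_\cup X_i = B_A$; once this is in hand, the argument is a clean assembly of the lemmas already proved in the paper.
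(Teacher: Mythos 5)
Your overall strategy coincides with the paper's: the same best-response multimap $\gamma_i(\mu_1,\dots,\mu_n)=\{\nu\in M_\cup X_i\mid \nu(X_i\setminus R^\star_i(\mu_i^*))=0\}$, the same product convexity built from the $\C_{\cup X_i}$, Theorem \ref{KA} for the fixed point, and Lemma \ref{P} to convert the fixed-point condition into the equilibrium condition. Your identification of $\bigvee B_A$ with the possibility indicator of $A$ and of $\bigwedge B_A$ with the necessity indicator, together with the check that $[\bigwedge B_A,\bigvee B_A]\cap M_\cup X_i=B_A$, is a correct and slightly more explicit version of the paper's observation that $\gamma_i(\mu_1,\dots,\mu_n)=[\mu_{X_i0},\nu]\cap M_\cup(X_i)$; the explicit nonemptiness via Dirac capacities is a welcome addition.

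There is, however, one step that fails as written: the passage from $\nu^k(X_i\setminus U)=0$ to $\nu(X_i\setminus U)=0$ via ``upper semicontinuity of $\mu\mapsto\mu(X_i\setminus U)$''. The subbasic sets $O_-(F,a)=\{c\mid c(F)<a\}$ make evaluation at a closed set \emph{upper} semicontinuous, and upper semicontinuity permits the value to jump up at the limit: it yields $\nu(F)\ge\limsup_k\nu^k(F)$, which is the useless inequality here. Concretely, on $X=[0,1]$ the Dirac possibility capacities $\delta_{1/k}$ converge to $\delta_0$ in $MX$, each $\delta_{1/k}(\{0\})=0$, yet $\delta_0(\{0\})=1$; so evaluation at a fixed closed set is genuinely not lower semicontinuous and your limiting step is invalid. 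The conclusion still holds, but you must argue contrapositively, as the paper does: if $\nu(X_i\setminus R^\star_i(\mu_i^*))>0$, choose a compactum $K\subset X_i\setminus R^\star_i(\mu_i^*)$ with $\nu(K)>0$ and an open $W$ with $K\subset W\subset\Cl W\subset X_i\setminus R^\star_i(\mu_i^*)$; then $\{c\mid c(W)>0\}$ is a subbasic open neighbourhood of $\nu$ (this is where the $O_+(U,a)$ half of the subbase is the relevant one), so eventually $\nu^k(\Cl W)\ge\nu^k(W)>0$, while upper semicontinuity of $R^\star_i$ (from Lemma \ref{C} and Lemma \ref{CP}) forces $R^\star_i(\circledast_{j\ne i}\mu^k_j)\subset X_i\setminus\Cl W$ eventually, contradicting $\nu^k\in G_i(\mu_1^k,\dots,\mu_n^k)$. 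With this repair your argument coincides with the paper's proof.
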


\begin{proof}  For each $i\in\{1,\dots,n\}$ consider a multimap $\gamma_i:\prod_{j=1}^nM_\cup(X_j)\multimap M_\cup(X_i)$ defined as follows $\gamma_i(\mu_1,\dots\mu_n)=\{\mu\in M_\cup(X_i)\mid \mu(X_i\setminus R^\star_i(\mu_i^*))=0\}$. It follows from the definition of topology on $M_\cup(X_i)$ that $\gamma_i(\mu_1,\dots\mu_n)$ is a closed subset of $M_\cup(X_i)$ for each  $(\mu_1,\dots\mu_n)\in \prod_{j=1}^nM_\cup(X_j)$. Consider $\nu\in M_\cup(X_i)$ defined as follows $\nu(A)=1$ if $A\cap R_i(\mu_i^*)\neq\emptyset$ and $\nu(A)=0$ otherwise.
Then we have  $\gamma_i(\mu_1,\dots\mu_n)=[\mu_{X_i0},\nu]\cap M_\cup(X_i)$, hence $\gamma_i(\mu_1,\dots\mu_n)\in\C_{\cup X_i}$.

Define a multimap $\gamma:\prod_{j=1}^nM_\cup(X_j)\multimap \prod_{j=1}^nM_\cup(X_j)$ by the formula $\gamma(\mu_1,\dots,$ $\mu_n)=\prod_{i=1}^n\gamma_i(\mu_1,\dots,\mu_n)$. Let us show that $\gamma$ is USC. Consider any pair $(\mu,\nu)\in\prod_{j=1}^nM_\cup(X_j)\times\prod_{j=1}^nM_\cup(X_j)$ such that $\nu\notin \gamma(\mu)$. Then there exists $i\in\{1,\dots,n\}$ and a compactum $K\subset X_i\setminus R^\star_i(\mu_i^*)$ such that $\nu_i(K)>0$. Put $O_\nu\{\alpha\in \prod_{j=1}^nM(X_j)\mid \alpha_i(K)>0\}$. Then $O_\nu$ is an open neighborhood of $\nu$. It follows from Lemma  \ref{C} and continuity of tensor product that there exists an open neighborhood $O_\mu$ of $\mu$ such that for each $\alpha\in O_\mu$ we have $R^\star_i(\alpha_i^*)\cap K=\emptyset$. Hence for each $(\alpha,\beta)\in O_\mu\times O_\nu$ we have $\beta\notin\gamma(\alpha)$ and $\gamma$ is USC.

We consider on $\prod_{j=1}^nM_\cup(X_j)$ the family $\C=\{\prod_{i=1}^n C_i\mid C_i\in\C_{\cup X_i}\}$. It is easy to see that $\C$ forms a normal convexity  on a compactum $\prod_{j=1}^nM_\cup(X_j)$ such that all convex sets are connected. Then by Theorem \ref{KA} $\gamma$ has a fixed point $\mu=(\mu_1,\dots\mu_n)\in \prod_{j=1}^nM_\cup(X_j)$. Let us show that $(\mu_1^*,\dots,\mu_n^*)$ is an   equilibrium under uncertainty respect $\star$. Consider any $i\in\{1,\dots,n\}$. Then $\mu_i(X_i\setminus R^\star_i(\mu_i^*))=0$. We have by Lemma \ref{P} $\mu_i^*(\prod_{j\ne i}X_i\setminus\prod_{j\ne i}R^\star_j(\mu_j^*))=0$.
\end{proof}

We can define on $M_\cap X$ a convexity $\C_{\cap X}=\{C\cap M_\cap X\mid C\in \C_X\}$. The homeomorphism $\kappa X:M_\cup X\to M_\cap X$ is an isomorphism of convex structures $\C_{\cup X}$ and $\C_{\cap X}$ (more precisely $C\in\C_{\cup X}$ iff $\kappa X(C)\in\C_{\cap X}$). Hence, using the same arguments as before, we obtain the following theorem.

\begin{theorem} Let  $\star$ and $\ast$ be two continuous t-norms. There exists $(\mu_1,\dots,\mu_n)\in M_\cap(X_1)\times\dots\times M_\cap(X_n)$ such that $(\mu_1^*,\dots,$ $\mu_n^*)$ is an   equilibrium under uncertainty with fuzzy payoff respect $\star$, where $\mu_i^*=\circledast_{j\neq i}\mu_j$.
\end{theorem}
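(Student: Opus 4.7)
The plan is to mimic the proof of the previous theorem verbatim, replacing $M_\cup$ by $M_\cap$ throughout, and leveraging the fact that $\kappa X : M_\cup X \to M_\cap X$ is an isomorphism of convex structures. By this isomorphism together with Lemma \ref{norm} and Lemma \ref{connected}, the convexity $\C_{\cap X_i}$ is normal and each of its elements is path connected, so the Kakutani-type Theorem \ref{KA} is available on the product $\prod_{j=1}^n M_\cap(X_j)$ endowed with the product convexity.

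For each $i$ I would define the multimap $\gamma_i : \prod_{j=1}^n M_\cap(X_j) \multimap M_\cap(X_i)$ by
$$\gamma_i(\mu_1,\dots,\mu_n) = \{\mu \in M_\cap(X_i) \mid \mu(X_i \setminus R^\star_i(\mu_i^*)) = 0\},$$
where $\mu_i^* = \widetilde{\circledast}_{j \neq i}\mu_j$. The first real step is to verify that $\gamma_i(\mu_1,\dots,\mu_n)$ is convex in $\C_{\cap X_i}$. Let $R = R^\star_i(\mu_i^*)$ and let $\nu \in MX_i$ be the capacity given by $\nu(A)=1$ when $A \cap R \neq \emptyset$ and $\nu(A)=0$ otherwise (this $\nu$ is in fact a possibility capacity). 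I claim $\gamma_i(\mu_1,\dots,\mu_n) = [\mu_{X_i0},\nu] \cap M_\cap(X_i)$. For $\alpha \le \nu$ and any closed $K \subset X_i \setminus R$ one has $K \cap R = \emptyset$, so $\alpha(K) \le \nu(K) = 0$; passing to the supremum over such $K$ gives $\alpha(X_i \setminus R) = 0$. The reverse inclusion follows from monotonicity, since any $A$ with $\nu(A)=0$ satisfies $A \subset X_i \setminus R$.

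The remaining steps are essentially identical to the proof of the previous theorem. Define the product multimap $\gamma = \prod_i \gamma_i$ on $\prod_j M_\cap(X_j)$ and verify upper semicontinuity: given $\nu \notin \gamma(\mu)$ one picks a compactum $K \subset X_i \setminus R^\star_i(\mu_i^*)$ with $\nu_i(K) > 0$ and then uses Lemma \ref{C} together with continuity of $\widetilde{\circledast}$ (Lemma \ref{CP}) to produce a neighborhood $O_\mu$ on which $R^\star_i(\alpha_i^*) \cap K = \emptyset$. Theorem \ref{KA} then supplies a fixed point $(\mu_1,\dots,\mu_n)$, and Lemma \ref{P} converts each condition $\mu_i(X_i \setminus R^\star_i(\mu_i^*)) = 0$ into the required $\mu_i^*(X_{-i} \setminus \prod_{j \neq i} R^\star_j(\mu_j^*)) = 0$.

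The main conceptual obstacle, compared with the possibility case, is that the join of necessity capacities is generally not a necessity capacity (only meets are stable in $M_\cap X$). It would therefore be misleading to try to locate a ``largest'' element of $\gamma_i$ inside $M_\cap X_i$; one must instead work with the interval $[\mu_{X_i0},\nu]$ taken in the ambient lattice $MX_i$, with $\nu \in M_\cup X_i \setminus M_\cap X_i$. Once this subtlety is handled, the isomorphism between $\C_{\cup X_i}$ and $\C_{\cap X_i}$ transports the remaining ingredients (normality, connectedness of convex sets, applicability of Theorem \ref{KA}) without modification.
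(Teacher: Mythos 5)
Your proposal follows the paper's own route exactly: the paper proves this theorem in one line by observing that $\kappa X$ is an isomorphism of the convex structures $\C_{\cup X}$ and $\C_{\cap X}$ and then repeating the argument of the preceding theorem verbatim in $M_\cap$. Your additional observation that the witness $\nu$ is a possibility (not necessity) capacity, so the interval $[\mu_{X_i0},\nu]$ must be formed in the ambient lattice $MX_i$ before intersecting with $M_\cap(X_i)$, is correct and is precisely the point that the definition $\C_{\cap X}=\{C\cap M_\cap X\mid C\in \C_X\}$ is designed to absorb.
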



\section{Some examples} As we remark before, our result is a generalisation of results obtained in \cite{R5}.  In this section we consider some examples which demonstrate that this generalization is not trivial and distinguish our approach between approaches considered before.

\begin{ex}
Consider the 2-person game in pure strategies $u:\{a,b\}\times\{a,b\}\to\R^2$, where  $u_1(a,a)=1/2$, $u_1(a,b)=0$, $u_1(b,a)=0$, $u_1(b,b)=1/2$ and $u_2(a,a)=0$, $u_2(a,b)=1/2$, $u_2(b,a)=1/2$, $u_2(b,b)=0$. Choose $\nu\in M_\cup (\{a,b\})$ defined by its density $[\nu](a)=1$ and $[\nu](b)=1/2$.  Let us show that the pair $(\nu,\nu)$ is  an   equilibrium under uncertainty respect t-norm $\wedge$ (by $\wedge$ we denote the minimum operation on $[0,1]$.

We have $P_1^\wedge(a,\nu)=\max\{\nu(f_t)\wedge t\mid t\in[0,1]\}=1\wedge1/2=1/2$ where $f(a)=1/2$ and $f(b)=0$. We also have $P_1^\wedge(b,\nu)=\max\{\nu(g_t)\wedge t\mid t\in[0,1]\}=1/2\wedge1/2=1/2$ where $g(a)=0$ and $g(b)=1/2$. Hence $R_1^\wedge(\nu)=\{a,b\}$. Analogously we can check that $R_2^\wedge(\nu)=\{a,b\}$. Hence $(\nu,\nu)$ is  an equilibrium under uncertainty respect $\wedge$.

 Now, consider another well-known continuous t-norm $\cdot$ (multiplication). Then  we have $P_1^\cdot(a,\nu)=\max\{\nu(f_t)\cdot t\mid t\in[0,1]\}=1\cdot1/2=1/2$ and $P_1^\cdot(b,\nu)=\max\{\nu(g_t)\cdot t\mid t\in[0,1]\}=1/2\cdot1/2=1/4$. Hence $R_1^\cdot(\nu)=\{a\}$ and $(\nu,\nu)$ is  not an equilibrium under uncertainty respect $\cdot$.

This example demonstrate that  equilibrium depends on t-norm, so, our generalization is not trivial.
\end{ex}

Another approach to  equilibrium is considered in  \cite{R5} where players are allowed  to play their mixed non-additive strategies expressed by capacities and expected payoff functions are represented by fuzzy integral. Let us describe this construction in  more detail.

We consider a game $u:Z=\prod_{i=1}^n Z_i\to[0,1]^n$ with compact Hausdorff spaces of pure strategies $Z_1,\dots,Z_n$ and continuous payoff functions $u_i:\prod_{i=1}^n Z_i\to[0,1]$. Let  $\star$ and $\ast$ be two t-norms.  We will extend the game $u:Z=\prod_{i=1}^n Z_i\to[0,1]^n$ to a game in mixed strategies  $eu:\prod_{i=1}^n M_\cup Z_i\to[0,1]^n$ using the integral generated by t-norm $\star$ and the tensor product generated by t-norm $\ast$.

 We define expected payoff functions $eu_i:\prod_{j=1}^n M_\cup Z_j\to[0,1]$ by the formula  $$eu_i(\nu_1,\dots,\nu_n)=\int_X^{\vee\star} u_i d(\nu_1\circledast\dots\circledast\nu_n)$$
 for $(\nu_1,\dots,\nu_n)\in\prod_{j=1}^n M_\cup Z_j$. We are looking for Nash equilibrium for such game.

 There exists a trivial solution of the problem of existence of Nash equilibrium. We can consider the natural order on $M_\cup Z_i$. Then each $M_\cup Z_i$ contains the greatest element $\mu_i$ defined by the formula $$\mu_i(A)=\begin{cases}
0,&A=\emptyset,\\
1,&A\neq\emptyset\end{cases}$$ for $A\in\F(Z_i)$. It is easy to see that $(\mu_1,\dots,\mu_n)$ is a Nash equilibrium point.  (In fact, the main attention in \cite{R5} is paid to the model, when the goal of each player is to minimize his expected payoff function. Since $M_\cup Z_i$ does not contain the smallest element,  existence of Nash equilibrium is not trivial for such games.) The following example shows that it is not the case for the equilibrium  under uncertainty.

\begin{ex}
Consider the 2-person game in pure strategies $u:\{a,b\}\times\{a,b\}\to\R^2$, where  $u_1(a,a)=1$, $u_1(a,b)=0$, $u_1(b,a)=1/2$, $u_1(b,b)=1/2$ and $u_2(a,a)=0$, $u_2(a,b)=1$, $u_2(b,a)=1/2$, $u_2(b,b)=1/2$. Then the capacity $\nu\in M_\cup (\{a,b\})$ defined by its density $[\nu](a)=1$ and $[\nu](b)=1$ is the greatest element of $M_\cup (\{a,b\})$.  Evidently the pair $(\nu,\nu)$ is  an Nash  equilibrium for the game with expected payoff functions $eu_i$ (we take $\wedge$ as both t-norms in the defining of payoff).

On the other hand we have $P_1^\wedge(a,\nu)=\max\{\nu(f_t)\wedge t\mid t\in[0,1]\}=1\wedge1=1$ where $f(a)=1$ and $f(b)=0$. We also have $P_1^\wedge(b,\nu)=\max\{\nu(g_t)\wedge t\mid t\in[0,1]\}=1/2\wedge1=1/2$ where $g(a)=1/2$ and $g(b)=1/2$. Hence $R_1^\wedge(\nu)=\{a\}$.  Hence $(\nu,\nu)$ is not  an equilibrium under uncertainty respect $\wedge$.
\end{ex}

However we have the following proposition.

\begin{proposition} Let  $(f,g):X\times Y\to [0,1]^2$ be a game in pure strategies with compact $X$ and $Y$ and continuous payoff functions $f$ and $g$. If a pair of capacities $(\nu,\mu)\in M_\cup(X)\times M_\cup(Y)$ is an equilibrium under uncertainty respect a t-norm $\star$, then $(\nu,\mu)$ is a point of Nash equilibrium  of the corresponding game in capacities with payoff functions $ef$ and $eg$ defined by fuzzy integral respect  $\star$ and tensor product generated by any  t-norm $\ast$.
\end{proposition}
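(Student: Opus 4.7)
The plan is to reduce, by symmetry between the two players, to proving $ef(\nu,\mu)\ge ef(\nu',\mu)$ for every $\nu'\in M_\cup(X)$; the analogous inequality $eg(\nu,\mu)\ge eg(\nu,\mu')$ for $\mu'\in M_\cup(Y)$ will follow by the same argument with the coordinates swapped. Write $s:=\max_{x\in X}P_1^\star(x,\mu)$, so that $R_1^\star(\mu)$ is the (nonempty compact) locus on which this maximum is attained. The strategy is then to establish the two bounds $ef(\nu',\mu)\le s$ for every $\nu'$ and $ef(\nu,\mu)\ge s$.

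For the upper bound I would first unfold the integral. Since $\nu'\circledast\mu\in M_\cup(X\times Y)$ has density $[\nu'\circledast\mu](x,y)=[\nu'](x)\ast[\mu](y)$ and any possibility capacity $\zeta$ satisfies $\zeta(F)=\max_{z\in F}[\zeta](z)$, the t-normed integral rewrites as
$$ef(\nu',\mu)=\max\bigl\{([\nu'](x)\ast[\mu](y))\star t\bigm| t\in[0,1],\ (x,y)\in X\times Y,\ f(x,y)\ge t\bigr\}.$$
From $[\nu'](x)\le 1$, the t-norm axiom $1\ast q=q$, and monotonicity of $\ast$ I get $[\nu'](x)\ast[\mu](y)\le[\mu](y)$; monotonicity of $\star$ then yields $([\nu'](x)\ast[\mu](y))\star t\le[\mu](y)\star t$. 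The resulting upper bound $\max\{[\mu](y)\star t\mid t,\ (x,y),\ f(x,y)\ge t\}$ is exactly $\max_{x\in X}P_1^\star(x,\mu)=s$, since for each fixed $x$ the inner expression coincides with $P_1^\star(x,\mu)$.

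The lower bound $ef(\nu,\mu)\ge s$ is the step where the equilibrium under uncertainty hypothesis actually enters, and it is the one I expect to be the conceptual heart of the argument. I would produce a point $x_0\in R_1^\star(\mu)$ at which $[\nu](x_0)=1$: since $\nu(X)=1$ and $[\nu]$ is upper semicontinuous on the compactum $X$, the density attains the value $1$ somewhere, and the equilibrium condition $\nu(X\setminus R_1^\star(\mu))=0$ forces $[\nu]\equiv 0$ off $R_1^\star(\mu)$, so every maximiser of $[\nu]$ automatically lies in $R_1^\star(\mu)$. Substituting $x=x_0$ into the unfolded formula for $ef(\nu,\mu)$ and using $1\ast[\mu](y)=[\mu](y)$ gives
$$ef(\nu,\mu)\ge\max\bigl\{[\mu](y)\star t\bigm| t\in[0,1],\ y\in Y,\ f(x_0,y)\ge t\bigr\}=P_1^\star(x_0,\mu)=s,$$
which completes the player~1 half. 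Observe that the tensor-product t-norm $\ast$ plays no role in the final estimate: it contributes only a factor bounded above by $[\mu](y)$ in the upper bound and recovered exactly when $[\nu](x_0)=1$ in the lower bound, so the conclusion holds for any continuous $\ast$, confirming the flexibility allowed in the statement.
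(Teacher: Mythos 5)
Your proof is correct and uses essentially the same ingredients as the paper's: the density formula for the tensor product, the bound $[\nu'](x)\ast[\mu](y)\le[\mu](y)$ (the paper phrases this as comparison with the maximal capacity $\nu_1$), and the existence of a point of density $1$ which the condition $\nu(X\setminus R_1^\star(\mu))=0$ forces into $R_1^\star(\mu)$. The only difference is organizational — you prove the two inequalities $ef(\nu',\mu)\le s$ and $ef(\nu,\mu)\ge s$ directly, whereas the paper runs the same computation as a proof by contradiction.
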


\begin{proof} Suppose the contrary $(\nu,\mu)$ is not a point of Nash equilibrium. We can assume, without loss of generality, that there exists $\nu'\in M_\cup(X)$ such that $ef(\nu',\mu)>ef(\nu,\mu)$. Let $\nu_1$ be the maximal element of $M_\cup(X)$ (it means that $\nu_1(A)=1$  for each closed non-empty subset $A$ of $X$ or equivalently  $[\nu_1](x)=1$ for each $x\in X$). It is easy to see that $ef(\nu',\mu)\le f(\nu_1,\mu)$, hence  we have $ef(\nu_1,\mu)>ef(\nu,\mu)$. We also have $ef(\nu_1,\mu)=\max\{\nu_1\circledast\mu(f_t)\star t\mid t\in[0,1]\}=\max\{\max\{[\nu_1](x)\ast[\mu](y)\mid(x,y)\in f_t\}\star t\mid t\in[0,1]\}=\max\{\max\{[\mu](y)\mid(x,y)\in f_t\}\star t\mid t\in[0,1]\}$. Hence there exist $t_0\in[0,1]$ and $(x_0,y_0)\in f_{t_0}$ such that $[\mu](y_0)\star t_0>ef(\nu,\mu)$. Since $\nu\in M_\cup(X)$, there exists $x_1\in X$ such that $[\nu](x)=1$. Then we have $ef(\nu,\mu)=\max\{\max\{[\nu](x)\ast[\mu](y)\mid(x,y)\in f_t\}\star t\mid t\in[0,1]\}\ge\max\{\max\{[\mu](y)\mid(x_1,y)\in f_t\}\star t\mid t\in[0,1]\}=P_1^\star(x_1,\mu)$ and $P_1^\star(x_0,\mu)\ge[\mu](y_0)\star t_0>ef(\nu,\mu)\ge P_1^\star(x_1,\mu)$. Hence $x_1\notin R_1^\star(\mu)$. Since $\nu(\{x_1\})=1$, we obtain a contradiction.

\end{proof}

\end{document}